\documentclass[a4paper,12pt]{article}

\usepackage{graphicx}
\usepackage{amsmath, amsfonts, amsthm, amssymb, enumerate, hyperref,
fancyhdr, fullpage}
\usepackage[all]{xy}

\newcommand{\ignore}[1]{}

\newtheorem{thm}{Theorem}
\newtheorem{lemma}{Lemma}
\newtheorem{prop}{Proposition}
\newtheorem{conj}{Conjecture}

\title{The Arc-Weighted Version of the Second Neighborhood Conjecture}

\author{Tyler Seacrest\thanks{The Universty of Montana Western,
710 S Atlantic St,
Dillon, MT 59725,
United States} \\
\texttt{t\_seacrest@umwestern.edu}}

\date{\today}

\begin{document}

\maketitle

\abstract{Seymour conjectured that every oriented simple graph contains a vertex whose second neighborhood is at least as large as its first.  Seymour's conjecture has been verified in several special cases, most notably for tournaments by Fisher~\cite{Fisher96}.

One extension of the conjecture that has been used by several researchers is to consider vertex-weighted digraphs.  In this paper we introduce a version of the conjecture for arc-weighted digraphs.   We prove the conjecture in the special case of arc-weighted tournaments, strengthening Fisher's theorem.  Our proof does not rely on Fisher's result, and thus can be seen as an alternate proof of said theorem.

\textbf{Keywords}:  Seymour's Second Neighborhood Conjecture, arc-weighted, diagraphs}

\section{Introduction}

Unless otherwise noted, all digraphs in this paper are oriented simple graphs, and thus do not contain loops or two-cycles.    We wil use $V(D)$ to denote the set of vertices of a digraph $D$, $A(D)$ to denote the set of arcs or edges.

Given a digraph $D$ and vertices $u$ and $v$, we call $u$ an \emph{$n$th out-neighbor} of $v$ if the shortest directed path connecting $v$ to $u$ has $n$ edges.  Let $N_n^+(v)$ be the set of all $n$th out-neighbors of $v$.   We will focus on $N_1^+(v)$ and $N_2^+(v)$, and we note that these are disjoint.  We will use $N_1^-(v)$ and $N_2^-(v)$ to refer to the sets of first and second in-neighbors, defined analogously to out-neighbors.  If not specified, the term \emph{neighbors} refers to first out-neighbors. 

If $|N_1^+(v)| \leq |N_2^+(v)|$, we will call $v$ a \emph{weakly expanding vertex} or a \emph{Seymour vertex}.  If $v$ is not Seymour vertex, then $|N_1^+(v)| > |N_2^+(v)|$ and we say $v$ is \emph{strongly contracting}.

Seymour made the following conjecture, which has become known as Seymour's Second Neighborhood Conjecture.

\begin{conj}[Seymour, see~\cite{DeanLatka95}]
\label{conj:original}
Every digraph without loops or two-cycles  contains a Seymour vertex.
\end{conj}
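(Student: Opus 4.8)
The plan is to attack Conjecture~\ref{conj:original} head on by a global counting argument, refined where necessary through vertex deletion. First I would dispose of the degenerate cases: if some vertex $v$ has $N_1^+(v) = \emptyset$ then $v$ is automatically a Seymour vertex, so I may assume every vertex has out-degree at least one. The central quantity is the \emph{expansion deficit} $\delta(v) = |N_1^+(v)| - |N_2^+(v)|$; a vertex is strongly contracting exactly when $\delta(v) > 0$, so it suffices to rule out the possibility that $\delta(v) > 0$ holds simultaneously at every vertex.

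My first route would be averaging: try to show $\sum_{v \in V(D)} \delta(v) \le 0$, since a nonpositive total forces some term to be nonpositive. Here $\sum_v |N_1^+(v)| = |A(D)|$, while $\sum_v |N_2^+(v)|$ should be attacked by double counting directed paths $v \to w \to u$ with $u \notin N_1^+(v) \cup \{v\}$, grouping these paths by their middle vertex $w$ so that the contribution of $w$ is governed by $|N_1^-(w)|$ and $|N_1^+(w)|$. Comparing the resulting expression against $|A(D)|$ would, in the best case, yield the inequality directly.

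The hard part --- and the reason this conjecture remains open --- is that the naive sum is genuinely not always nonpositive: the double count over-weights a target $u$ reachable from many common predecessors, and in a sparse oriented graph there is no adjacency to force the needed cancellation. When $D$ is a tournament every nonadjacent pair is forbidden, so the set of ``missing'' length-two paths is rigidly controlled, and this is exactly what lets Fisher's weighting (and its arc-weighted strengthening established later in this paper) push the estimate through. To rescue the general case I would pass to a weighted average, seeking nonnegative $p_v$ with $\sum_v p_v = 1$ and $\sum_v p_v \delta(v) \le 0$; by linear-programming duality this amounts to a fractional packing condition, and the true obstacle is exhibiting such a distribution in the absence of completeness.

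Should no purely global bound exist, the fallback is induction on $|V(D)|$: delete a carefully chosen vertex $v$ --- a sink of some strongly connected component, or a vertex of maximum out-degree --- obtain a Seymour vertex $u$ in $D - v$ by the inductive hypothesis, and argue that either $u$ survives as a Seymour vertex of $D$ or else $v$ itself qualifies. I expect the argument to stall precisely here: removing $v$ can delete one vertex from $N_1^+(u)$ while simultaneously deleting several from $N_2^+(u)$, so the inequality $|N_1^+(u)| \le |N_2^+(u)|$ need not be preserved. Pinning down a deletion order that controls both neighborhoods at once is the crux, and it is exactly this difficulty that the restriction to tournaments is engineered to avoid.
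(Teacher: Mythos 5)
You were asked to prove Conjecture~\ref{conj:original}, which is Seymour's Second Neighborhood Conjecture itself. The paper does not prove this statement and cannot be expected to: it is a well-known open problem, and the paper only establishes special cases of it (the directed-triangle condition of Proposition~\ref{prop:triangle-full} and the arc-weighted tournament case of Theorem~\ref{thm:arc-tournaments}). Your proposal, by your own accurate admission, is also not a proof: the global averaging route and the deletion/induction route both stall exactly where you say they do, and those stalls are not repairable technical defects --- they are the open problem. Concretely, the averaging attempt fails because a single vertex $u$ can lie in $N_2^+(v)$ for many vertices $v$ without any adjacency forcing compensating first-neighborhood mass, so $\sum_{v} |N_2^+(v)|$ cannot in general be bounded below by $\sum_{v} |N_1^+(v)| = |A(D)|$; and the induction fails because deleting a vertex can shrink second neighborhoods strictly faster than first neighborhoods, so a Seymour vertex of $D - v$ need not survive in $D$.

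It is worth noting that both of your ideas appear in the paper in weakened, rigorous forms, which is evidence your diagnosis of the obstructions is sound. Your averaging argument is precisely the mechanism of Proposition~\ref{prop:triangle-full}: when every arc lies in a directed triangle one has $N_1^-(v) \subseteq N_2^+(v)$ for every $v$, and summing over all vertices then forces a contradiction with every vertex being strongly contracting; the directed-triangle hypothesis supplies exactly the adjacency you correctly identify as missing in general oriented graphs. Your weighted-average idea, ``find nonnegative $p_v$ with $\sum_v p_v \delta(v) \le 0$ via LP duality,'' is the content of Theorem~\ref{thm:expanding-or-contracting}, proved with Farkas' lemma following Fisher: the existence of such a distribution is equivalent to the nonexistence of a strongly contracting vertex-weighting of $\overleftarrow{D}$. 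But that theorem relocates the difficulty rather than resolving it, and only for tournaments (via Fisher's losing density, or via the arc-weighted machinery of Section~\ref{sec:tournaments}) is the required distribution actually known to exist. So the correct conclusion is that neither your proposal nor the paper proves Conjecture~\ref{conj:original}; the statement remains a conjecture.
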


Early work on this conjecture focused on tournaments, which was known as Dean's Conjecture.  This important special case was proven by Fisher~\cite{Fisher96} using the losing density of a digraph.  A weight function $\ell: V \to [0, 1]$ is a \emph{losing density} if the weights sum to one and every vertex has more weight on its out-neighbors than its in-neighbors.  That is, $\sum_{v \in V} \ell(v) = 1$ and for every vertex $u$, $\sum_{v \to u} \ell(v) \leq \sum_{u \to v} \ell(v)$.   He first proved that every digraph (not just tournaments) contained a losing density using Farka's lemma (see~\cite{BertsimasTsitsiklis97}) from linear programming.  He then used the existence of a losing density on a tournament to show that every tournament has a Seymour vertex.

Later, Havet and Thomass\'{e}~\cite{HavetThomasse00} gave a short and elegant proof of Fisher's Theorem using median orders.  A \emph{median order} is an ordering of the vertices $v_1, \ldots, v_n$ of a digraph that minimizes the number of pairs $v_i \to v_j$ where $j < i$.   They proved that for a tournament, $v_n$ in a median order is a Seymour vertex.  They went on to show the existence of two Seymour vertices in the case the tournament had no sink.

Several other partial results are known.  Kaneko and Locke~\cite{KanekoLocke01} proved Seymour's conjecture for graphs with minimum out-degree six or less.    Chen, Shen, and Yuster~\cite{ChenShenYuster03} show every oriented graph contains a vertex $v$ such that $|N_1^+(v)| > \gamma |N_2^+(v)|$ for $\gamma  \approx 0.657298$ is the real root of the equation $2 x^3 + x^2 - 1 = 0$.  Fidler and Yuster~\cite{FidlerYuster07} introduced the vertex-weighted version of the conjecture, and proved, among other results, that the conjecture is true for orientations of a complete graph minus a matching, which generalized Fisher's Theorem.    Ghazal~\cite{Ghazal12}, also used vertex-weighted digraphs, and also extended Fisher's theorem.  In this case, the author showed that the conjecture holds for orientations of generalized stars or threshold graphs.

We examine the second neighborhood conjecture for arc-weighted digraphs, defined in Section~\ref{sec:arc-weighted}.  The arc-weighted version is equivalent to the original conjecture, as shown in Proposition~\ref{prop:equiv}.  For tournaments, however, the arc-weighted version and original version are not equivalent, because the auxiliary graph used in the proof of Proposition~\ref{prop:equiv} is not a tournament.  Our main result, given in Section~\ref{sec:tournaments}, shows that the arc-weighted tournaments do in fact satisfy the second neighborhood conjecture.  In fact, we do not explicitly use Fisher's theorem in our proof, and thus our proof can be seen as an alternate proof of Fisher's Theorem, albeit one that is not as radically different as Havet and Thomass\'{e}'s proof.  Along the way, in Section~\ref{sec:vertex-weighted}, we will develop some results related to vertex-weighted digraphs.

\section{Vertex-Weighted Digraphs}
\label{sec:vertex-weighted}

Seymour's second neighborhood conjecture can be expanded to vertex-weighted digraphs, which are digraphs that have a weight function $\eta$ which assigns each vertex a nonnegative real number.  This extends naturally to a weight function on sets of vertices $S$, where $\eta(S) = \sum_{v \in S} \eta(v)$.   We define the \emph{$n$th neighborhood weight} of a vertex $v$ to be $\eta(N_n^+(v))$.  In a weighted digraph, $v$ is \emph{weakly expanding} if $\eta(N_1^+(v)) \leq \eta(N_2^+(v))$, and is \emph{strongly contracting} if $\eta(N_1^+(v)) > \eta(N_2^+(v))$.  A weakly expanding vertex is called a \emph{Seymour vertex}.

Seymour's conjecture then becomes
\begin{conj}[Seymour, vertex-weighted version]
Every weighted digraph without loops or two-cycles contains a Seymour vertex.
\end{conj}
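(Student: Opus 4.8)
The plan is to attack the conjecture directly, by contradiction, adapting Fisher's losing-density method to the weighted setting rather than reducing it to the unweighted conjecture. First I would dispose of the easy reductions. A sink $v$ (out-degree zero) has $\eta(N_1^+(v)) = 0 \le \eta(N_2^+(v))$ and is automatically a Seymour vertex, so assume $D$ has no sink. More usefully, let $S$ be a terminal strong component of $D$, that is, a strong component with no arcs leaving it. For $v \in S$ every directed path out of $v$ stays inside $S$, so $N_1^+(v)$ and $N_2^+(v)$ are the same whether computed in $D$ or in the induced subdigraph $D[S]$. Hence any Seymour vertex of $D[S]$ is a Seymour vertex of $D$, and I may assume that $D$ is strongly connected.

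Next, suppose for contradiction that every vertex is strongly contracting, so $\eta(N_1^+(v)) > \eta(N_2^+(v))$ for all $v$. Introduce an auxiliary vertex weighting $\ell \colon V(D) \to [0,1]$ with $\sum_{v} \ell(v) = 1$, to be chosen later. Multiplying the strict inequality by $\ell(v)$ and summing over $v$ gives
\[
  \sum_{v} \ell(v)\,\eta(N_1^+(v)) \;>\; \sum_{v} \ell(v)\,\eta(N_2^+(v)),
\]
the inequality being strict because $\ell$ places positive mass on at least one strongly contracting vertex. Swapping the order of summation and using that $w \in N_k^+(v)$ if and only if $v \in N_k^-(w)$ recasts each side as a sum indexed by the target vertex $w$, yielding
\[
  \sum_{w} \eta(w)\,\ell(N_1^-(w)) \;>\; \sum_{w} \eta(w)\,\ell(N_2^-(w)).
\]
The goal is then to choose $\ell$ so that $\ell(N_1^-(w)) \le \ell(N_2^-(w))$ holds at every $w$ carrying positive weight $\eta(w)$. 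Since every $\eta(w)$ is nonnegative, such a choice forces the left-hand side to be at most the right-hand side, contradicting the displayed strict inequality and proving the conjecture.

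Producing such an $\ell$ is exactly the weighted, second-order analogue of Fisher's losing density, and this is where I expect the real difficulty to lie. Fisher guarantees, for every digraph, a distribution dominating along arcs (more $\ell$-mass on out-neighbors than on in-neighbors) by a Farkas-lemma feasibility argument, and the plan is to set up the corresponding linear program whose feasibility yields the stronger first-versus-second in-neighbor domination that I need. The hard part will be that this second-order domination does not follow from the first-order one for arbitrary oriented graphs; it is precisely the step that Fisher and, later, Havet and Thomass\'{e} resolve using the tournament structure. I would therefore try to feed both the strong-connectivity reduction and the nonnegative weight function $\eta$ into the dual program, so that the crux of the whole argument becomes showing that the relevant polytope is nonempty, for which I would aim to construct an explicit dual certificate from a suitable flow on the strongly connected digraph $D$.
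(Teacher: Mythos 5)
You should first be clear about what you are attempting: this statement is left as a \emph{conjecture} in the paper, and for good reason. Taking $\eta \equiv 1$ recovers Seymour's original Second Neighborhood Conjecture (Conjecture~\ref{conj:original}), and the paper notes the reverse implication as well, so the two are equivalent and both open. The paper's only comment on this statement is that it is implied by the original version; its actual theorems (the Farkas dichotomy of Theorem~\ref{thm:expanding-or-contracting}, and the arc-weighted tournament result) prove reformulations or special cases, never this conjecture itself. So a complete proof here would resolve a famous open problem, and indeed your proposal has an essential gap exactly where you flag it: the existence of the auxiliary distribution $\ell$.

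That gap is not a fillable LP-feasibility step. Your double-counting argument (multiply by $\ell(v)$, sum over $v$, swap the order of summation) is precisely the easy ``not both'' direction of the paper's Theorem~\ref{thm:expanding-or-contracting}: a weighting $\eta$ of $D$ under which every vertex is strongly contracting and a weighting $\ell$ under which every vertex $w$ satisfies $\ell(N_1^-(w)) \le \ell(N_2^-(w))$ (equivalently, a weighting of $\overleftarrow{D}$ making every vertex weakly expanding) cannot coexist. Consequently, under your standing contradiction hypothesis --- and $\eta$ has full support in the decisive case $\eta \equiv 1$ --- the dual certificate you plan to construct \emph{provably does not exist}; Farkas' lemma cuts against you, not for you. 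The only way to rescue the strategy is to prove, unconditionally and for every oriented graph, that such an $\ell$ exists; but by the same dichotomy that existence statement is equivalent to the conjecture you are trying to prove (it is exactly the paper's reformulated conjecture stated after Theorem~\ref{thm:expanding-or-contracting}). Fisher's result, which you invoke, gives only first-order domination ($\ell$-mass on out-neighbors versus in-neighbors along single arcs); upgrading this to domination of first by second in-neighborhoods is precisely what is open for general digraphs, and is the step for which Fisher needed tournament structure and Havet and Thomass\'{e} needed median orders. Your preliminary reductions (terminal strong components, no sinks) and the summation-swap identity are correct, but they only repackage the conjecture in its dual form; they do not bring you closer to a proof.
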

Note that while this may first appear to be a stronger conjecture, it is implied by the original version.

Like Fisher, we will use Farkas' Lemma.
\begin{lemma}[Farkas' Lemma (see~\cite{BertsimasTsitsiklis97})]
\label{lemma:farkas}
For any matrix $A$ and vector $\mathbf{b}$, exacly one of the following holds.
\begin{enumerate}
	\item $A \mathbf{x} = \mathbf{b}$ for some $\mathbf{x}$ such that $\mathbf{x} \geq 0$.
	\item $\mathbf{p}^T A \geq 0$ for some $\mathbf{p}$ such that $\mathbf{p}^T \mathbf{b} < 0$.
\end{enumerate}
\end{lemma}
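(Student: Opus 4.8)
The plan is to prove Farkas' Lemma via the separating hyperplane theorem, handling the ``at most one'' and ``at least one'' directions separately. The first direction is immediate: if both alternatives held at once, then with $\mathbf{x} \geq 0$ and $\mathbf{p}^T A \geq 0$ we would have $\mathbf{p}^T \mathbf{b} = \mathbf{p}^T(A\mathbf{x}) = (\mathbf{p}^T A)\mathbf{x} \geq 0$, contradicting $\mathbf{p}^T \mathbf{b} < 0$. Hence at most one alternative can hold, and the entire content of the lemma lies in showing that the failure of the first alternative forces the second.

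To that end I would consider the set $C = \{A\mathbf{x} : \mathbf{x} \geq 0\}$, the convex cone generated by the columns of $A$; the first alternative is exactly the assertion $\mathbf{b} \in C$. Assuming $\mathbf{b} \notin C$, I would invoke the separating hyperplane theorem to obtain a vector $\mathbf{p}$ and a scalar $\alpha$ with $\mathbf{p}^T \mathbf{b} < \alpha \leq \mathbf{p}^T \mathbf{c}$ for every $\mathbf{c} \in C$. Since $\mathbf{0} \in C$, we get $\alpha \leq 0$, hence $\mathbf{p}^T \mathbf{b} < 0$. Moreover, because $C$ is a cone, $\lambda \mathbf{c} \in C$ for all $\lambda > 0$ and $\mathbf{c} \in C$, so $\lambda(\mathbf{p}^T \mathbf{c}) \geq \alpha$ for every $\lambda$; letting $\lambda \to \infty$ forces $\mathbf{p}^T \mathbf{c} \geq 0$ on all of $C$. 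Applying this to each column of $A$ gives $\mathbf{p}^T A \geq 0$, which is precisely the second alternative.

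The main obstacle, and the only genuinely nontrivial step, is justifying the separating hyperplane theorem, which requires $C$ to be a \emph{closed} convex set. That a finitely generated cone is closed is true but demands proof. I would establish it with a Carath\'eodory-type reduction: every point of $C$ is a nonnegative combination of a \emph{linearly independent} subset of the columns of $A$, so $C$ is the union, over the finitely many linearly independent column subsets $S$, of the cones $C_S = \{A_S \mathbf{y} : \mathbf{y} \geq 0\}$, where $A_S$ has full column rank. Each $C_S$ is closed because $A_S$ admits a continuous left inverse $(A_S^T A_S)^{-1} A_S^T$: if $A_S \mathbf{y}^{(k)} \to \mathbf{z}$, then the coefficient vectors converge to a nonnegative limit $\mathbf{y}^*$ with $A_S \mathbf{y}^* = \mathbf{z}$, so $\mathbf{z} \in C_S$. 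A finite union of closed sets is closed, giving the closedness of $C$ and completing the argument. (An alternative route that sidesteps this subtlety entirely is Fourier--Motzkin elimination, which I would use if one prefers a purely combinatorial, hyperplane-free proof.)
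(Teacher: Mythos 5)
Your proof is correct, but there is nothing in the paper to compare it against: the paper imports Farkas' Lemma as a known result, citing Bertsimas and Tsitsiklis, and gives no proof of its own. Your argument is the standard separation-based proof, and it is sound: the ``at most one'' direction is the one-line computation $\mathbf{p}^T\mathbf{b} = (\mathbf{p}^T A)\mathbf{x} \geq 0$; the ``at least one'' direction correctly separates $\mathbf{b}$ from the cone $C = \{A\mathbf{x} : \mathbf{x} \geq 0\}$ and then uses $\mathbf{0} \in C$ and the cone property (letting $\lambda \to \infty$) to upgrade the separating inequality to $\mathbf{p}^T\mathbf{b} < 0$ and $\mathbf{p}^T A \geq 0$. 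Most importantly, you identify the one genuine subtlety --- that the separating hyperplane theorem in the form you need requires $C$ to be closed, and that closedness of a finitely generated cone is not automatic --- and you handle it properly via the conic Carath\'eodory reduction to cones over linearly independent column subsets, each of which is closed because the full-column-rank matrix $A_S$ has the continuous left inverse $(A_S^T A_S)^{-1} A_S^T$. The only step left implicit is the Carath\'eodory reduction itself (given a nonnegative combination over a linearly dependent set, use a dependence relation to drive one coefficient to zero while keeping the rest nonnegative), but that is standard and easily supplied. Your parenthetical alternative via Fourier--Motzkin elimination is also a legitimate, fully combinatorial route that avoids the closedness issue entirely.
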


Let the \emph{reverse} of $D$, denoted $\overleftarrow{D}$, be the digraph with vertex set $V(D)$ and edge set $\{ vu \mid uv \in A(D)\}$.   Using Farkas' lemma we show the following.

\begin{thm}
\label{thm:expanding-or-contracting}
Given any digraph $D$ and its reverse $\overleftarrow{D}$, there is either a vertex-weighting of $D$ where each vertex is weakly expanding or a vertex-weighting of $\overleftarrow{D}$ where each vertex is strongly contracting, and not both. 
\end{thm}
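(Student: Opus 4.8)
The plan is to phrase both conditions as linear feasibility problems in the weight vector $\eta \in \R^{V(D)}$ and then read off the dichotomy from Farkas' Lemma (Lemma~\ref{lemma:farkas}). I would introduce the $V(D) \times V(D)$ matrix $B$ whose row indexed by $v$ encodes the inequality $\eta(N_2^+(v)) - \eta(N_1^+(v)) \geq 0$: set $B_{v,u} = 1$ if $u \in N_2^+(v)$, $B_{v,u} = -1$ if $u \in N_1^+(v)$, and $B_{v,u} = 0$ otherwise. This is well defined because $N_1^+(v)$ and $N_2^+(v)$ are disjoint and neither contains $v$ (a repeated vertex would force a loop or a two-cycle). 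With this setup, a nonnegative weighting makes every vertex of $D$ weakly expanding exactly when $B\eta \geq 0$.

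First I would identify the matrix controlling the reverse condition. Because out-neighbors in $\overleftarrow{D}$ are in-neighbors in $D$, requiring every vertex of $\overleftarrow{D}$ to be strongly contracting is the strict system $\eta(N_1^-(v)) - \eta(N_2^-(v)) > 0$ for all $v$, with neighborhoods taken in $D$. Using the equivalences $u \in N_1^-(v) \iff v \in N_1^+(u)$ and $u \in N_2^-(v) \iff v \in N_2^+(u)$ and comparing entries, I expect the associated matrix to be precisely $-B^T$, so that the reverse condition reads $B^T\mu < 0$ (strict in every coordinate) for some $\mu \geq 0$. Pinning down this transpose identity $C = -B^T$ by a clean four-case check is the conceptual core of the proof.

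The ``not both'' direction is then immediate: if a nonzero $\eta \geq 0$ satisfies $B\eta \geq 0$ and some $\mu \geq 0$ satisfies $B^T\mu < 0$, then $0 \leq \mu^T(B\eta) = (B^T\mu)^T\eta < 0$, since $\eta$ is nonnegative with at least one positive coordinate and every coordinate of $B^T\mu$ is strictly negative. This is also the place to insist that a ``weighting'' be nonzero and, for convenience, normalized by $\mathbf{1}^T\eta = 1$; without nontriviality the all-zero weighting would trivially expand and destroy the dichotomy.

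For ``at least one'' I would feed the normalized expansion system into Farkas' Lemma. Converting $B\eta \geq 0$ to equalities via a slack vector $s \geq 0$ and adjoining the row $\mathbf{1}^T\eta = 1$ produces a system of the form $A\mathbf{x} = \mathbf{b}$, $\mathbf{x} \geq 0$ with $\mathbf{x} = (\eta, s)$. If this is infeasible (i.e.\ no expanding weighting of $D$ exists), Farkas supplies a dual certificate $(q, r)$ with $q \leq 0$, $r < 0$, and $B^Tq + r\mathbf{1} \geq 0$; putting $\mu = -q \geq 0$ turns this into $B^T\mu \leq -|r|\,\mathbf{1} < 0$, exactly a strongly contracting weighting of $\overleftarrow{D}$. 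The only real obstacle I foresee is bookkeeping: arranging the slack variables, the normalization row, and the sign of $r$ so that alternative~(2) of Farkas yields the strict inequality $B^T\mu < 0$ rather than a merely nonstrict one. It is precisely the normalization constraint that forces this strictness, so it cannot be omitted.
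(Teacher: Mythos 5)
Your proposal is correct and follows essentially the same route as the paper: the same signed matrix (your $B$ is the paper's $N$), the same slack variables and normalization row $\mathbf{1}^T\eta = 1$ fed into Farkas' Lemma, and the same transpose observation converting the dual certificate into a strongly contracting weighting of $\overleftarrow{D}$. The only cosmetic difference is that you verify the ``not both'' direction directly via $\mu^T B\eta$, whereas the paper simply invokes the ``exactly one'' exclusivity built into Farkas' Lemma.
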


\begin{proof}
Let $D$ be a digraph on vertex set $v_1 \ldots v_n$.  Let $N$ be a matrix with entry $n_{ij}$ in the $i$th row and $j$th column, where $n_{ij}$ is $-1$ if $v_j$ is a first out-neighbor of $v_i$, and $n_{ij} = 1$ if $v_j$ is a second out-neighbor of $v_i$.  Let $A$ be the matrix
$$
A = \begin{bmatrix} N & -I \\ 1 \cdots 1 & 0 \cdots 0\end{bmatrix}.
$$
Let $\mathbf{x}^T = (x_1, \ldots, x_n, s_1, \ldots, s_n)$ and let $\mathbf{b}^T = (0, \ldots, 0, 1)$.  Then $A \mathbf x = \mathbf b$ for $\mathbf x \geq 0$ has a solution if and only if there are non-trivial vertex-weights $x_1, \ldots, x_n$ such that every vertex has as much weight in its second neighborhood then its first.  In other words, every vertex is weakly expanding.   Let $\mathbf{p}^T = (p_1, \ldots, p_{n+1})$.  Then the system $\mathbf{p}^T A \geq 0$, $\mathbf p^T \mathbf b < 0$ means that there are nonpositive weights $p_1, \ldots, p_n$ of the vertices of $D$ such that $\eta(N_1^-(v)) < \eta(N_2^-(v))$ for all $v$.  If we use the nonnegative weights $-p_1, \ldots, -p_n$ instead, we have $\eta(N_1^-(v)) > \eta(N_2^-(v))$.  In other words, there is a weighting of the vertices such that every vertex in $\overleftarrow{D}$ is strongly contracting.   Farkas' lemma said that exactly one of these must hold for every digraph, which gives the result.
\end{proof}

The second neighborhood conjecture says it is impossible for every vertex of $\overleftarrow{D}$ to be strongly contracting, and hence we have that the following is equivalent to the second neighborhood conjecture.
\begin{conj}
Every digraph without loops or two-cycles has a vertex-weighting such that every vertex is weakly expanding.
\end{conj}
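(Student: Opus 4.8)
The plan is to leverage Theorem~\ref{thm:expanding-or-contracting}, which already supplies the right dichotomy. Since the reverse operation $D \mapsto \overleftarrow{D}$ is a bijection on the class of all digraphs, the theorem's ``one and not the other'' statement shows that the present conjecture is equivalent to a single negative assertion: \emph{no} digraph admits a nonnegative, nontrivial vertex-weighting in which every vertex is strongly contracting. I would therefore argue by contradiction, assuming a digraph $D$ carries a weighting $\eta \geq 0$ (not identically zero) with $\eta(N_1^+(v)) > \eta(N_2^+(v))$ at every vertex $v$, and try to derive an impossibility.

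The natural engine is weighted double counting. Choosing an auxiliary nonnegative weight $w$ and summing the strict per-vertex inequalities against it gives
\[
\sum_v w(v)\,\eta(N_1^+(v)) \;>\; \sum_v w(v)\,\eta(N_2^+(v)).
\]
The first-neighborhood side rewrites cleanly over the arcs of $D$ as $\sum_{v \to u} w(v)\eta(u)$, and the aim is to pick $w$ so that the two sides become comparable and the inequality self-contradicts. Two choices suggest themselves: $w = \eta$, which symmetrizes the arc sum to $\sum_{v \to u}\eta(v)\eta(u)$; and $w$ equal to a losing density, whose existence on every digraph Fisher established and which is tailored to control exactly the in-/out-neighbor asymmetry that the reverse operation injects. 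Combining such a density with $\eta$ and regrouping the arc sum by heads and tails is the most promising route to forcing the contradiction.

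The hard part---and it is the crux of the whole problem---is that the second-neighborhood side admits no matching decomposition. The condition $u \in N_2^+(v)$ is not an arc relation but the existence of a length-two directed path from $v$ to $u$ together with the \emph{absence} of the arc $v \to u$, so $\sum_v w(v)\,\eta(N_2^+(v))$ does not split over any fixed combinatorial structure the way the arc sum does. The number of length-two paths funneling into a given second neighbor fluctuates from vertex to vertex with no uniform bound, so no weight $w$ aligns the two sums term by term, and every naive summation leaves an uncontrolled residual gap of the wrong sign. This is precisely the obstruction that makes the statement equivalent to Seymour's conjecture: breaking the first/second-neighborhood asymmetry requires structural input beyond the linear-programming dichotomy, exactly the kind of input the arc-weighted tournament analysis of Section~\ref{sec:tournaments} provides in its special case, and which remains unavailable in full generality.
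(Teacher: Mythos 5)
You should first be clear about the status of this statement in the paper: it is presented as a \emph{conjecture}, and the paper never proves it. The only thing the paper establishes is its equivalence to Seymour's Conjecture~\ref{conj:original}, via the Farkas dichotomy of Theorem~\ref{thm:expanding-or-contracting} together with the observation that reversal is an involution on digraphs --- which is exactly the reduction you open with. So on the part that can actually be carried out, you and the paper do the same thing: the conjecture is equivalent to the single negative assertion that no digraph admits a nontrivial nonnegative weighting making every vertex strongly contracting, and that assertion is just the vertex-weighted second neighborhood conjecture applied to $\overleftarrow{D}$.

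Your subsequent attempt and its honest failure are also well calibrated. The weighted double count you propose is precisely the argument of Proposition~\ref{prop:triangle-full}: summing $\eta(N_1^-(v)) \leq \eta(N_2^+(v))$ over all vertices and comparing with $\sum_v \eta(N_1^+(v)) = \sum_v \eta(N_1^-(v))$. That argument needs the containment $N_1^-(v) \subseteq N_2^+(v)$, i.e., every arc lying in a directed triangle, and your diagnosis of why it cannot be pushed further --- the second-neighborhood sum does not decompose over any fixed arc structure, unlike the first-neighborhood sum --- is the correct identification of the obstruction. So there is no gap in your write-up relative to the paper: neither you nor the paper proves the statement, because it stands or falls with Seymour's conjecture itself, and your proposal correctly reproduces the equivalence that is the paper's entire content here while explicitly refusing to claim more.
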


Theorem~\ref{thm:expanding-or-contracting}, while not explicitly stated or proved previously, was strongly inspired by Fisher~\cite{Fisher96}, especially his use of the Farkas' lemma.  Also, Fisher's main result can be restated as
\begin{thm}[Fisher~\cite{Fisher96}]
\label{thm:expanding}
Every tournament $D$ has a vertex-weighting such that every vertex in $\overleftarrow{D}$ is weakly expanding. (In particular, this is achieved by a losing density.)
\end{thm}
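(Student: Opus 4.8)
The plan is to take $\ell$ to be a losing density of $D$ and verify directly that $\ell$ exhibits the required weighting; the existence of a losing density on any digraph is a consequence of Farkas' Lemma (the part of Fisher's work that precedes, and is logically independent of, the second-neighborhood conclusion), so this keeps the argument self-contained. First I would translate the goal. Since reversing all arcs interchanges out- and in-neighborhoods, a vertex $v$ is weakly expanding in $\overleftarrow{D}$ exactly when $\ell(N_1^-(v)) \le \ell(N_2^-(v))$ in $D$. Fix $v$ and set $O = N_1^+(v)$ and $I = N_1^-(v)$. In a tournament any second in-neighbor of $v$ must be an out-neighbor (otherwise it would be a first in-neighbor), so $N_2^-(v)$ is precisely the set $B$ of out-neighbors that beat some vertex of $I$, and its complement $C = O \setminus B$ consists of the out-neighbors dominated by $\{v\} \cup I$; that is, $\{v\} \cup I \subseteq N_1^-(u)$ and $N_1^+(u) \subseteq O \setminus \{u\}$ for each $u \in C$. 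The goal is now $\ell(I) \le \ell(B)$.

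The key tool is a quadratic arc identity: for any nonnegative weighting of any digraph,
\[ \sum_w \ell(w)\bigl(\ell(N_1^+(w)) - \ell(N_1^-(w))\bigr) = 0, \]
because both $\sum_w \ell(w)\ell(N_1^+(w))$ and $\sum_w \ell(w)\ell(N_1^-(w))$ count $\sum_{x \to y} \ell(x)\ell(y)$ over the arcs. Applied to the losing density on $D$, every summand is nonnegative, since that is exactly the losing-density inequality $\ell(N_1^-(w)) \le \ell(N_1^+(w))$; as the total is zero, every summand vanishes. Hence each vertex of positive weight is \emph{balanced}, $\ell(N_1^-(w)) = \ell(N_1^+(w))$.

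The main step, and the point where the naive approach breaks, is controlling $\ell(C)$: summing the losing-density inequality over $u \in C$ fails because the in-neighborhoods overlap and spurious multiplicities appear. Instead I would apply the arc identity a second time, now to the subtournament induced on $C$ (weight-zero vertices of $C$ contribute nothing, so only balanced vertices matter). For each $u \in C$ with $\ell(u) > 0$, balance together with $\{v\} \cup I \subseteq N_1^-(u)$ and $N_1^+(u) \subseteq O \setminus \{u\}$ lets me rewrite the within-$C$ surplus of $u$ as
\[ \ell(N_1^+(u) \cap C) - \ell(N_1^-(u) \cap C) = \ell(I) + \ell(v) + \ell(N_1^-(u) \cap B) - \ell(N_1^+(u) \cap B). \]
Weighting by $\ell(u)$ and summing over $u \in C$, the left-hand side is $0$ by the arc identity for $D[C]$, so $(\ell(I) + \ell(v))\,\ell(C) = \sum_{u \in C}\ell(u)\bigl(\ell(N_1^+(u) \cap B) - \ell(N_1^-(u) \cap B)\bigr)$, which is at most $\ell(B)\,\ell(C)$ using $\ell(N_1^+(u) \cap B) \le \ell(B)$ and $\ell(N_1^-(u) \cap B) \ge 0$.

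Finally I would split on $\ell(C)$. If $\ell(C) > 0$, dividing gives $\ell(I) + \ell(v) \le \ell(B)$, hence $\ell(I) \le \ell(B)$; if $\ell(C) = 0$, then $\ell(I) \le \ell(N_1^+(v)) = \ell(B) + \ell(C) = \ell(B)$ straight from the losing-density inequality at $v$. Either way $v$ is weakly expanding in $\overleftarrow{D}$, which is the claim. The hard part is exactly the multiplicity obstruction in the direct summation; the device that overcomes it is invoking the arc identity inside the induced subtournament on the dominated set $C$ rather than adding the per-vertex inequalities. I would also note that the argument uses only the existence of a losing density, never Fisher's second-neighborhood conclusion itself, so it genuinely furnishes an independent proof of that theorem.
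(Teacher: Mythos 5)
Your argument is correct, and it is worth noting that the paper itself offers no proof of Theorem~\ref{thm:expanding} to compare against: the statement is a restatement of Fisher's result and is simply cited, so you have supplied the missing derivation rather than paralleled an in-paper one. I checked the chain of steps and each holds: in a tournament $N_2^-(v)$ is exactly the set $B$ of out-neighbors beating some in-neighbor, so the goal is $\ell(I)\le\ell(B)$; the double-counting identity $\sum_w \ell(w)\bigl(\ell(N_1^+(w))-\ell(N_1^-(w))\bigr)=0$ combined with the losing-density inequalities forces every positive-weight vertex to be balanced; for $u\in C$ the decomposition $N_1^-(u)=\{v\}\cup I\cup(N_1^-(u)\cap B)\cup(N_1^-(u)\cap C)$ and $N_1^+(u)\subseteq O\setminus\{u\}$ turns balance into the stated identity for the within-$C$ surplus; and the second application of the double-counting identity to $D[C]$ (which needs no losing-density hypothesis on the subtournament, only the symmetry of the double count) yields $(\ell(I)+\ell(v))\ell(C)\le\ell(B)\ell(C)$, with the degenerate case $\ell(C)=0$ correctly handled by the losing-density inequality at $v$ itself. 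This is essentially the mechanism of Fisher's original argument --- the vanishing of the quadratic form of the skew-symmetric tournament matrix at an optimal strategy, exploited once globally and once on the dominated subtournament --- so your proof is best described as a faithful self-contained reconstruction of the proof the paper outsources to \cite{Fisher96}, not a new route; its value here is that it makes the paper's appeal to Theorem~\ref{thm:expanding} independent of the external reference, and it correctly isolates the one genuine subtlety (the overlap of in-neighborhoods that defeats naive summation over $C$).
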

Fisher then concluded by a simple calculation that $D$ has a Seymour vertex.  We could also go from Theorem~\ref{thm:expanding} to the existence of a Seymour vertex immediately by applying Theorem~\ref{thm:expanding-or-contracting}.   

Another consequnce of Theorem~\ref{thm:expanding-or-contracting} is that a minimum counterexample to the vertex-weighted second neighborhood conjecture must have a dual counterexample.   That is, consider a minimum digraph $D$ such that $D$ has a vertex-weight function $\eta$  so that it is a counterexample to the second neighborhood conjecture.  There must also be a weight function $\eta'$ so that $\overleftarrow{D}$ is a counterexample:  otherwise, one could find an expansion weighting of $D$ by Theorem~\ref{thm:expanding-or-contracting} and subtract it from the counterexample weighting until a vertex reaches weight zero.  Once there is a vertex of weight zero, deleting that vertex yields a smaller counterexample.

We will end this section with a result that we will need later on, showing the second neighborhood conjecture is true for vertex-weighted digraphs where if every arc is contained in a directed triangle.  This result is along the same lines as one of the results by Brantner, Brockman, Kay, and Snively~\cite{BrantnerBrockmanKaySnively09}, who showed the second neighborhood conjecture holds for digraphs without a transitive triangle.

\begin{prop}
\label{prop:triangle-full}
Let $D$ be a vertex-weighted digraph such that every edge is contained in a directed triangle. Then $D$ satisfies the second neighborhood conjecture.
\end{prop}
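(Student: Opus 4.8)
The plan is to argue by contradiction, assuming every vertex of $D$ is strongly contracting, and to exploit a structural consequence of the triangle hypothesis. First I would establish the key observation that every in-neighbor of a vertex is a second out-neighbor of that vertex, i.e. $N_1^-(v) \subseteq N_2^+(v)$ for every $v$. To see this, take any $w \in N_1^-(v)$, so that $w \to v$ is an arc. By hypothesis this arc lies in a directed triangle $w \to v \to x \to w$ for some vertex $x$. Then $v \to x \to w$ is a directed path of length two, and since $D$ has no two-cycles, $w$ is not a first out-neighbor of $v$; hence $w \in N_2^+(v)$. Because the vertex weights are nonnegative, this containment gives $\eta(N_1^-(v)) \le \eta(N_2^+(v))$ for every $v$.

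Now suppose, for contradiction, that every vertex is strongly contracting, so that $\eta(N_1^+(v)) > \eta(N_2^+(v))$ for all $v$. Combining this with the inequality above yields $\eta(N_1^+(v)) > \eta(N_1^-(v))$ for every $v$. The main idea is then a weighted double count of the arcs. Expanding the definitions, both $\sum_v \eta(v)\,\eta(N_1^+(v))$ and $\sum_v \eta(v)\,\eta(N_1^-(v))$ are equal to $\sum_{v \to u} \eta(v)\eta(u)$, the sum being taken over all arcs $v \to u$, so these two quantities coincide. On the other hand, multiplying the strict inequality $\eta(N_1^+(v)) > \eta(N_1^-(v))$ by the nonnegative weight $\eta(v)$ and summing over all $v$ gives $\sum_v \eta(v)\,\eta(N_1^+(v)) > \sum_v \eta(v)\,\eta(N_1^-(v))$, provided at least one vertex carries positive weight. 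This contradicts the equality, so some vertex must fail to be strongly contracting, i.e. must be a Seymour vertex.

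A couple of routine points remain. If every vertex has weight zero then every vertex is trivially a Seymour vertex, so I may assume some $\eta(v) > 0$, which is exactly what makes the summed inequality strict. I expect the only real content to lie in the structural containment $N_1^-(v) \subseteq N_2^+(v)$; once that is in hand, the double-counting identity and the resulting contradiction are pure bookkeeping. It is worth noting that this argument makes no use of Theorem~\ref{thm:expanding-or-contracting} and applies to arbitrary nonnegative vertex weights, without any tournament assumption.
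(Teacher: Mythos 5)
Your proof is correct, and it follows the same overall strategy as the paper's: argue by contradiction, use the triangle hypothesis to get $N_1^-(v) \subseteq N_2^+(v)$, and close with a double count over the arcs. But the double-counting identity you use is genuinely different, and it is in fact the one that works. The paper sums the per-vertex inequalities directly and closes the chain with the identity $\sum_v \eta(N_1^+(v)) = \sum_v \eta(N_1^-(v))$. That is the weighted analogue of ``both sides count each arc once,'' but it does not survive the introduction of vertex weights: the left side equals $\sum_{(a,b) \in A(D)} \eta(b)$ and the right side equals $\sum_{(a,b) \in A(D)} \eta(a)$ --- one counts heads, the other tails --- and these agree only when $\sum_u \eta(u)\bigl(d^-(u) - d^+(u)\bigr) = 0$. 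Two directed triangles sharing the arc $a \to b$, with $\eta(a) \neq \eta(b)$, already break the equality, even though every arc lies in a directed triangle. Your version multiplies each vertex's inequality by $\eta(v)$ before summing, which makes both sides of the closing identity equal to $\sum_{(a,b) \in A(D)} \eta(a)\eta(b)$, a genuine identity, and you correctly dispose of the all-zero weighting so that the summed inequality stays strict. So your route buys an argument that is actually valid for arbitrary nonnegative vertex weights, whereas the paper's final equality needs exactly the repair you supply.
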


\begin{proof}
By way of contradiction, suppose every edge of $D$ is contained in a directed triangle, and yet $D$ does not satisfy the second neighborhood conjecture.  For any $v \in V(D)$, $N_1^-(v) \subseteq N_2^{+}(v)$.  Thus we have the string of inequalitites
$$
\sum_{v \in V(D)} \eta(N_1^-(v)) \leq \sum_{v \in V(D)} \eta(N_2^{+}(v)) < \sum_{v \in V(D)} \eta(N_1^+(v)) = \sum_{v \in V(D)} \eta(N_1^-(v)),
$$
which yields a contradiction.
\end{proof}

\section{Arc-Weighted Digraphs}
\label{sec:arc-weighted}

Seymour's second neighborhood conjecture can also be expanded to arc-weighted digraphs, which are digraphs with a weight function $w$ assigning each arc and nonnegative real number.  Given a vertex $v$, the \emph{first neighborhood weight} of $v$ is denoted $\alpha^D_v$ and is defined as $\sum_{u \in N_1^+(v)} w(vu)$.   

We will now define the second neighborhood weight of $v$, which will be denoted $\beta^D_v$.  Let $s$ be a vertex at the end of a path of length 2 starting at $v$.   Define $\beta_v(s)$ to be the maximum over all $u$ such that $v \to u \to s$ of $w(us) - w(vs)$, and $\beta_v(s)$ is $0$ if this maximum is negative.  Notationally,
$$
\beta_v(s) = \max \left( \{0\} \cup \bigcup_{u:  v \to u \to s} \{w(us) - w(vs)\} \right).
$$
For the purpose of this definition, if $vs$ is not an edge, take $w(vs) = 0$.   The \emph{second neighborhood weight} $\beta^D_v$ is given by $\sum_{s} \beta_v(s)$.  Finally, the \emph{neighborhood weight difference} of a vertex $v$, denoted by $\delta^D_v$, is $\beta^D_v - \alpha^D_v$.  When $D$ is clear from context, we will use $\alpha_v$, $\beta_v$, and $\delta_v$ instead of $\alpha^D_v$, $\beta^D_v$, and $\delta^D_v$.  For an arc-weighted digraph, a vertex is \emph{weakly expanding} if $\delta_v \geq 0$.  Similarly, a vertex is  \emph{strongly contracting} if $\delta_v < 0$.  If every vertex in the graph is weakly expanding, the graph is weakly expanding, and the same is true for strongly contracting.

Consider the following example.

\begin{center}
\includegraphics[scale=0.8]{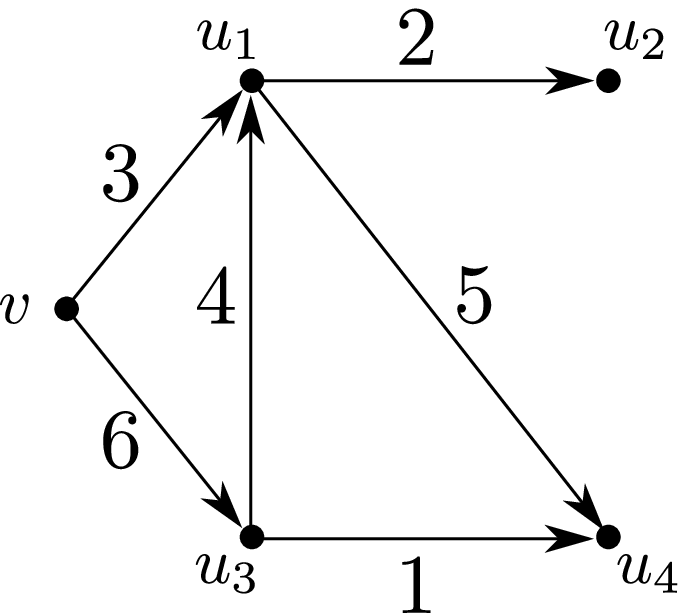}
\end{center}
Here, the first neighborhood weight of $v$ is the sum of the out-going arcs, and hence $\alpha_v = 3 + 6 = 9$.   To compute the second neighborhood weight, we need the value $\beta_v(s)$ of every neighbor $s$ at the end of a path of length $2$.  Note that the possibilities for $s$ are $u_1$, $u_2$, and $u_4$.    We see $\beta_v(u_2) = 2$ because there is only one way to reach $u_2$, and it ends on an arc of weight $2$.  There are two ways to get to $u_4$, one that ends on an arc of length $5$, and another that ends on an arc of weight $1$.  Taking the maximum of these two, we get $\beta_v(u_4) = 5$.  Finally, there is only one way to get to $u_1$, and it ends on an arc of weight $4$.  But since $v u_1$ is an arc of weight $3$, we subtract $3$ from $4$ to get $\beta_v(u_1) = 1$.  The total second neighborhood weight is then $\beta_v = \beta_v(u_1) + \beta_v(u_3) + \beta_v(u_4) = 1 + 5 + 2 = 8$.  Thus we have $\delta_v = \beta_v - \alpha_v = 8 - 9 = -1$, which shows that $v$ is strongly contracting.

Notice that an arc of weight zero is not the same as a missing arc.  If there is an arc of weight zero from $u$ to $v$, $u$ will still count all of $v$'s neighbors as potential second neighbors, where as a missing arc from $u$ to $v$ means the neighbors of $v$ do not count as potential second neighbors of $u$.

The arc-weighted second neighborhood conjecture is as follows.

\begin{conj}
\label{conj:arc-weighted}
Every arc-weighted digraph $D$ without loops or two-cycles contains a vertex $v$ such that $\delta_v \geq 0$.  
\end{conj}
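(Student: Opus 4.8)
The plan is to attack Conjecture~\ref{conj:arc-weighted} directly by contradiction, mirroring the double-counting argument behind Proposition~\ref{prop:triangle-full}. Suppose $D$, with arc-weight function $w$, is a counterexample in which every vertex is strongly contracting, so $\beta_v < \alpha_v$ for all $v$, and choose such a counterexample minimal in the number of vertices and then in the total weight $W = \sum_{e \in A(D)} w(e) = \sum_v \alpha_v$. Since all weights vanishing would make every vertex weakly expanding, we may assume $W > 0$, so summing $\beta_v < \alpha_v$ gives $\sum_v \beta_v < W$. The goal is the reverse global inequality $\sum_v \beta_v \ge W$, which then contradicts minimality. To reach it I would first establish an arc-weighted analogue of the containment $N_1^-(v) \subseteq N_2^+(v)$ that drives Proposition~\ref{prop:triangle-full}: when every arc lies in a directed triangle, each arc $u \to s$ sits in a triangle, $u$ has an in-neighbor $v$ with $v \to u \to s$, and the weight $w(us)$ is then available to $\beta_v(s)$ since $\beta_v(s) \ge w(us) - w(vs)$ with $w(vs) = 0$ when $vs$ is absent. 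Charging each arc's weight to such a witnessing length-two path should yield $\sum_v \beta_v \ge W$ in the \emph{triangle-full} case; the delicate bookkeeping is the $-w(vs)$ offsets and the fact that each $\beta_v(s)$ records only the single maximizing arc into $s$.

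The substance of the plan is a chain of structural reductions bringing an arbitrary minimal counterexample toward triangle-full form. First I would clear degeneracies: by the remark following the definition of $\beta_v$, a zero-weight arc still transmits second-neighbor structure and cannot simply be deleted, but a vertex with no positive incident weight contributes to no $\alpha$ or $\beta$ value and can be removed, so minimality forces every vertex to carry weight. Next I would pass to the condensation of $D$ into strongly connected components; every arc failing to lie in a directed triangle becomes an arc between distinct components, so the entire obstruction to triangle-fullness is concentrated in the acyclic part of $D$. I would then try to peel off a terminal (sink) component $C$ of the condensation and argue that either $C$ already contains a weakly expanding vertex of $D$, or that removing the arcs entering $C$ from earlier components yields a strictly smaller arc-weighted counterexample, contradicting minimality.

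The hard part will be exactly this reduction across the acyclic part, and I expect it to be the genuine obstacle rather than a technicality. The naive averaging $\sum_v \beta_v \ge W$ fails outright in the presence of transitive triangles and inter-component arcs, precisely the configurations excluded by triangle-fullness, so no purely global counting of this kind can close the gap. Indeed, by Proposition~\ref{prop:equiv} the statement of Conjecture~\ref{conj:arc-weighted} is equivalent to the original Conjecture~\ref{conj:original}, so any complete argument must carry the full difficulty of Seymour's conjecture; I would therefore expect the general case to resist these elementary methods. The realistic target for the method is a setting where the acyclic obstruction is rigidly constrained, namely tournaments, whose condensation is a transitive tournament on its strong components with a unique sink component: there the peeling step has a single forced target and the averaging can be pushed through, which is the route I would take to obtain the theorem in that case.
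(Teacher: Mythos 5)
The statement you were asked to prove is a conjecture, and the paper never proves it: by Proposition~\ref{prop:equiv} it is equivalent to the original Conjecture~\ref{conj:original} (Seymour's conjecture), which remains open. You identify this correctly, and your retreat to the tournament case matches what the paper actually establishes (Theorem~\ref{thm:arc-tournaments}). So the fair question is whether your sketched program would deliver the tournament case, and it would not, for two concrete reasons.

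First, your structural reduction rests on the claim that every arc inside a strongly connected component lies in a directed triangle, so that the obstruction to triangle-fullness sits entirely in the condensation. That is false, even for tournaments: in the strong tournament on $\{1,2,3,4\}$ with arcs $1\to 2$, $2\to 3$, $3\to 1$, $2\to 4$, $3\to 4$, $4\to 1$, the arc $3\to 4$ lies in no directed triangle (the only out-neighbor of $4$ is $1$, and $1\not\to 3$), yet the whole digraph is a single strong component. So for a strong tournament your peeling step has nothing to peel, and the case your averaging is supposed to handle is not triangle-full. Second, the averaging itself has the gap you flag: $\beta_v(s)$ is a \emph{maximum} over intermediate vertices, not a sum, so when two arcs $u_1\to s$ and $u_2\to s$ are both charged to the same pair $(v,s)$ you can collect only $\max(w(u_1s),w(u_2s))$, and $\sum_v \beta_v \geq W$ does not follow; no repair of this bookkeeping is offered. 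The paper's tournament proof avoids both problems and uses two ingredients absent from your plan: the Farkas-type duality of Theorem~\ref{thm:expanding-or-contracting}, and the contraction Lemma~\ref{lemma:contract}. If $\overleftarrow{D}$ admits a vertex-weighting making every vertex strongly contracting, the \emph{vertex-weighted} Proposition~\ref{prop:triangle-full} (applied in contrapositive) produces an arc in no directed triangle, and contracting along it — legal in a tournament — yields a smaller counterexample; otherwise duality gives a weakly expanding vertex-weighting $\eta^*$ of $D$, which is used to push arc weights down to create more zero-weight arcs (contradicting a secondary maximality assumption), with one final contraction case. In short, where you propose condensation plus global counting, the paper substitutes duality plus contraction, and that substitution is exactly what makes the tournament case go through.
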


The following proposition shows the equivalence to the original conjecture via an auxiliary graph construction.

\begin{prop}
\label{prop:equiv}
The arc-weighted version of the second neighborhood conjecture (Conjecture~\ref{conj:arc-weighted})  is equivalent to the orignal conjecture (Conjecture~\ref{conj:original}).
\end{prop}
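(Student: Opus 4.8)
The plan is to prove the two implications separately. One direction is immediate: Conjecture~\ref{conj:original} is the special case of Conjecture~\ref{conj:arc-weighted} in which every arc is given weight $1$. First I would check that under unit weights $\alpha_v = |N_1^+(v)|$ and $\beta_v = |N_2^+(v)|$: for a genuine second out-neighbor $s$ we have $v \not\to s$ and the witnessing arc has weight $1$, so $\beta_v(s) = 1$, while for a vertex $s$ that ends a path of length two but is already a first out-neighbor we get $w(us) - w(vs) = 1 - 1 = 0$ and hence $\beta_v(s) = 0$. Thus $\delta_v \ge 0$ exactly when $v$ is a Seymour vertex, so applying the arc-weighted conjecture to an unweighted digraph returns a Seymour vertex.

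The substantive direction is to derive Conjecture~\ref{conj:arc-weighted} from Conjecture~\ref{conj:original}. Given an arc-weighted digraph $D$, I would build an ordinary oriented simple graph $D'$, the auxiliary graph, whose Seymour vertices encode the weakly expanding vertices of $D$, and then apply the original conjecture to $D'$. Before constructing $D'$ I would reduce to the case of positive integer weights: scaling all weights by a common positive constant multiplies $\alpha_v$ and each $\beta_v(s)$, hence $\delta_v$, by that constant and so preserves the sign of $\delta_v$, which lets me pass from rational to integer weights. The remaining cases (irrational weights, and in particular weight-zero arcs, which are \emph{not} the same as missing arcs) I would handle at the end by continuity: $\alpha_v$ is linear in the weights and $\beta_v$ is a finite sum of maxima of linear functions, so $\delta_v$ depends continuously on the weight vector, and since there are finitely many vertices, some vertex is weakly expanding for infinitely many terms of a positive-rational sequence of weight vectors converging to the given $w$, whence $\delta_v \ge 0$ at $w$.

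For positive integer weights I would define $D'$ by a weighted blow-up: replace each vertex $v$ by $M_v = \max_{x \to v} w(xv)$ independent copies $t_v^1, \ldots, t_v^{M_v}$, and for each arc $x \to s$ of $D$ join every copy of $x$ to the first $w(xs)$ copies of $s$, namely $t_s^1, \ldots, t_s^{w(xs)}$ (these exist since $M_s \ge w(xs)$). Always using the lowest-indexed copies is the crucial device: the copies of $s$ reachable through different intermediates form nested prefixes, so their union over all intermediates $x$ with $v \to x \to s$ realizes $\max_{v \to x \to s} w(xs)$ rather than a sum, which is how the graph encodes the maximum in the definition of $\beta_v(s)$. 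The same convention simultaneously realizes the correction by $w(vs)$: when $v \to s$, the copies $t_s^1, \ldots, t_s^{w(vs)}$ are first out-neighbors of every copy of $v$ and so drop out of its second out-neighborhood, leaving exactly $\max(0, \max_{v \to x \to s} w(xs) - w(vs))$ copies of $s$. I would then verify the two bookkeeping identities $|N_1^+(t_v^i)| = \alpha_v$ and $|N_2^+(t_v^i)| = \beta_v$ for every copy, so that $t_v^i$ is a Seymour vertex of $D'$ precisely when $\delta_v \ge 0$ in $D$.

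Finally I would confirm that $D'$ is a legitimate instance of Conjecture~\ref{conj:original}: copies of a single vertex are non-adjacent, there are no loops because $x \to s$ forces $x \ne s$, and there are no two-cycles because a two-cycle between a copy of $x$ and a copy of $s$ would force both $x \to s$ and $s \to x$ in $D$. Applying the original conjecture yields a Seymour vertex of $D'$, which is some copy $t_v^i$, and the identities then give $\delta_v \ge 0$; if instead $D$ has no arcs at all then every $\delta_v = 0$ trivially, and otherwise $D'$ is nonempty. I expect the main obstacle to be designing the blow-up so that it captures the maximum and the $-w(vs)$ correction at once while keeping $|N_1^+|$ exactly equal to $\alpha_v$: a naive subdivision makes the second neighborhood merely count the distinct original out-neighbors and so loses the weights, and separating the ``first-layer'' and ``target'' roles tends either to double-count the first neighborhood or to destroy the cancellation, so the crux is recognizing that a single nested-prefix blow-up, in which copies of a vertex point directly at the low-indexed copies of its out-neighbors, resolves all three requirements simultaneously.
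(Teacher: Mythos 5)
Your proposal is correct and follows essentially the same route as the paper: the identical nested-prefix blow-up ($|S_v| = \max_{x\to v} w(xv)$ copies, with each copy of $x$ pointing to the first $w(xs)$ copies of $s$) and the same two bookkeeping identities $|N_1^+| = \alpha_v$, $|N_2^+| = \beta_v$. The only difference is cosmetic: the paper argues by contrapositive and perturbs a strict counterexample to rational positive weights (discarding zero-weight arcs), whereas you argue forward and handle irrational and zero weights by a continuity/limit argument; both are valid.
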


\begin{proof}
If we assume the arc-weighted version, then the original follows by applying the arc-weighted version to a digraph where each arc has weight 1.

Now suppose there is a counterexample $D$ to the arc-weighted version of the conjecture.  Thus every vertex is strongly contracting, which means $\delta_v < 0$ for every vertex $v$. We can assume $D$ has no arcs of weight zero, as removing these  cannot make $\delta_v$ larger.   By the fact that the rationals are dense in the reals, we can assume the arc-weights on $D$ are rational.  By scaling the weights with a large enough multiple, we can then assume the arc-weights on $D$ are positive integers. 

We now create an auxiliary digraph $D'$ without arc-weights that will be a counterexample to the original conjecture.  To form $D'$ from $D$, replace each vertex $v$ of $D$ with a set of vertices $S_v$ such that $|S_v| = \max_{u \in N_1^-(v)} w(uv)$, and arbitrarily order the vertices of each $S_v$.  If there is an edge from $u$ to $v$ in $D$, place arcs between every vertex of $S_u$ to the first $w(uv)$ vertices of $S_v$ in $D'$.  

We now show that $D'$ is a counterexample to the original conjecture. Given any vertex $x \in S_v$, $x$ has as out neighbors the first $w(vu)$ vertices in $S_u$ for all $u \in N_1^+(v)$.  Thus we see that $|N_1^+(x)| = \sum_{u \in N_1^+(v)} w(vu)$, which is the first neighborhood weight of $v$, and hence $|N_1^+(x)| = \alpha_v$. 

$N_2^+(x)$ consists of vertices in $S_s$ such that there exists a $u$ with $v \to u \to s$ in $D$.  In particular, $S_s \cap N_2^+(x)$ consists of the first $w(us)$ elements of $S_s$, where $u$ is chosen such that $v \to u \to s$ and $w(us)$ is maximized.    However, there may be some elements of $S_s$ that are actually first neighbors, so the number of neighbors is reduced by $w(vs)$.  Putting this all together, we get  
$$
|N_2^+(x)| = \sum_{s \in N_1^+(v) \cup  N_2^+(v)} \max_{v \to u \to s} \{w(us) - w(vs), 0\}.
$$
This is the second neighborhood weight of $v$, and hence $|N_2^+(x)| = \beta_v$.  
  
Since $D$ is a counterexample to the arc-weighted conjecture, we know $\alpha_v > \beta_v$.  Hence $|N_1^+(x)| > |N_2^+(x)|$ for all $x \in V(D')$, which shows that $D'$ is a counterexample to the original conjecture.
\end{proof}

Notice that if $D$ has vertex-weights $\eta$ and is a counterexample to the second neighborhood conjecture, define an arcweight function $w$ where $w(uv) = \eta(v)$.  Then $D$ with arc-weights $w$ is a counterexample to the arc-weighted version of the conjecture.

\section{Arc-Weighted Tournaments}
\label{sec:tournaments}

While the arc-weighted version of the conjecture follows from the original conjecture, it is not true that the arc-weighted version for tournaments is a simple consequence of Fisher's Theorem.  This is because if an arc-weighted tournament $D$ were to undergo the the transformation to $D'$ from the proof of Proposition~\ref{prop:equiv}, $D'$ would likely not be a tournament.

Nor do the proofs of Fisher or Havet and Thomass\'{e} easily extend to the case of arc-weighted tournaments.  

To extend Fisher's proof, one would need to extend the idea of losing density to an arc-weighted tournament. One natural candidate is as follows:  given a digraph $D$ with arc-weights given by $w$, we say $\ell$ is a \emph{losing density} if, for all $v$, 
$$
\sum_{xv \in A(D)} w(xv) \ell(x) \leq \sum_{vy \in A(D)} w(vy) \ell(y).
$$ 
 Fisher's main result for tournaments without arc-weights is that the losing density, as a vertex-weighting of $\overleftarrow{D}$, is weakly expanding at every vertex.  However, a losing density is not necessarily weakly expanding at every vertex of $\overleftarrow{D}$ once arc-weights are introduced.  In the example below (adapted from~\cite{Fisher96}), $D$ is shown with an arc-weighted losing density, but the vertex in the upper left corner is not weakly expanding in $\overleftarrow{D}$.  

\begin{center}
\includegraphics[scale=0.8]{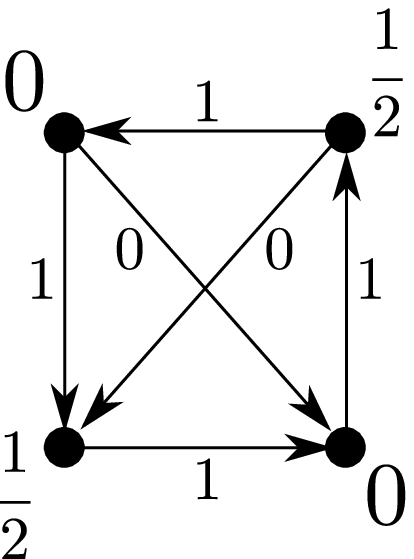}
\end{center}

Simiarly, Havet and Thomass\'{e}'s proof, using median orders, does not seem to easily generalize either.  Generalizing this to arc-weighted digraphs, a median order $v_1, \ldots, v_n$ minimizes the total arc-weight of arcs $v_i \to v_j$ where $j < i$.  For non-arc-weighted tournaments, they proved $v_n$ is a Seymour vertex.  However, that is not the case for arc-weighted tournaments, by taking Havet and Thomass\'{e}'s example for non-tournaments and adding arcs of weight zero.  

\begin{center}
\includegraphics[scale=0.8]{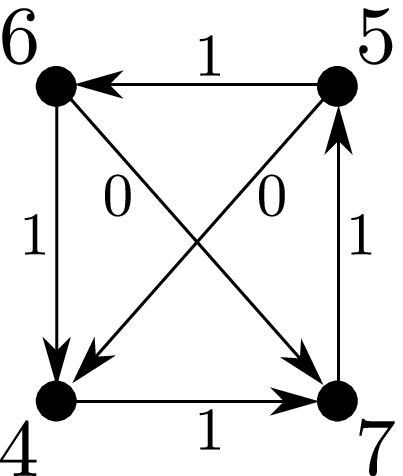}
\end{center}

This vertex-weighted digraph is meant to represent a digraph \emph{without} vertex-weights.  Instead, each vertex with a vertex-weight $k$ represents a tournament of $k$ vertices with $0$-weighted arcs between every pair of vertices, and an arc between two weighted vertices $u \to v$ represents a complete bipartite graph of edges from every vertex in the tournament $u$ to every vertex in the tournament $v$.    Here, some vertex in the tournament labeled $4$ will contain $v_n$, but $v_n$ will not be a Seymour vertex.

However, by extending the work of Fisher, we can generalize his proof to arc-weighted tournaments.   It depends on a nice property of arc-weighted diagraphs, where in some situations, one can contract along an arc and maintain the property of vertices being strongly contracting.

\begin{lemma}
\label{lemma:contract}
Let $D$ be an arc-weighted digraph, and let $u$ and $v$ be two vertices such that, for any vertex $x$, if $x \to u$ is an arc of nonzero weight, then $x \to v$ is also an arc (possibly weight zero).  Then by removing $u$ and adding all of the weight of $x \to u$ to $x \to v$ for all $x$, a new digraph $D'$ is created and $\delta^{D'}_y \leq \delta^D_y$ for all vertices $y \in V(D)$.  Such a maneuver is called a \emph{contraction} of $u$ to  $v$.
\end{lemma}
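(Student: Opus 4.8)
The plan is to exploit the decomposition $\delta = \beta - \alpha$ and to prove separately that the contraction leaves every first neighborhood weight unchanged while it cannot increase any second neighborhood weight. First I would dispose of the $\alpha$ term. Fix any $y \ne u$. The only out-arc of $y$ altered by the contraction is $y \to u$: this arc is deleted, removing $w(yu)$ from $\alpha_y$, but exactly $w(yu)$ is simultaneously added to $y \to v$. The hypothesis guarantees that $y \to v$ is already an arc whenever $w(yu) > 0$, so this transfer is well defined; hence $\alpha^{D'}_y = \alpha^D_y$, and it remains only to prove $\beta^{D'}_y \le \beta^D_y$ for every $y \ne u$.

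To compare the second neighborhood weights I would split the sum $\beta_y = \sum_s \beta_y(s)$ according to the endpoint $s$ of a length-two path out of $y$. Since the contraction creates no new arcs (it only deletes $u$ and raises the weights of certain existing arcs into $v$), it creates no new potential second neighbors, so every endpoint appearing in $D'$ already appears in $D$. For an endpoint $s \notin \{u, v\}$ neither $w(ts)$ nor $w(ys)$ is changed, and the length-two paths $y \to t \to s$ in $D'$ are exactly those in $D$ with the middle vertex $u$ removed; thus the candidates defining $\beta^{D'}_y(s)$ form a subset of those defining $\beta^D_y(s)$, giving $\beta^{D'}_y(s) \le \beta^D_y(s)$. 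The endpoint $s = u$ disappears entirely in $D'$, which frees up the nonnegative term $\beta^D_y(u)$.

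The heart of the argument is the endpoint $s = v$, where arcs into $v$ have gained weight. A length-two path $y \to t \to v$ survives in $D'$ exactly when $t \ne u$, and its candidate value is $w'(tv) - w'(yv) = [w(tv) - w(yv)] + [w(tu) - w(yu)]$, because every in-arc of $v$ has absorbed the corresponding in-arc of $u$. The first bracket is a candidate for $\beta^D_y(v)$ via the same path in $D$, hence at most $\beta^D_y(v)$; the second bracket is at most $\beta^D_y(u)$, since when $w(tu) > 0$ the path $y \to t \to u$ exists in $D$ and realizes $w(tu) - w(yu)$ as a candidate, while when $w(tu) = 0$ the bracket equals $-w(yu) \le 0 \le \beta^D_y(u)$. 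Therefore $\beta^{D'}_y(v) \le \beta^D_y(v) + \beta^D_y(u)$. Summing the three cases, the extra $\beta^D_y(u)$ is precisely cancelled by the term freed in the $s = u$ case, yielding $\beta^{D'}_y \le \beta^D_y$ and hence $\delta^{D'}_y = \beta^{D'}_y - \alpha^{D'}_y \le \beta^D_y - \alpha^D_y = \delta^D_y$.

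I expect the main obstacle to be the bookkeeping in the $s = v$ case: one must recognize that the weight poured into the arcs entering $v$ is exactly accounted for by candidates for $\beta^D_y(u)$, and that these are available precisely because deleting $u$ removes the endpoint $u$ from the second neighborhood sum. The convention that absent arcs carry weight zero, together with the hypothesis guaranteeing that $x \to v$ exists whenever $x \to u$ carries positive weight, is what makes the identity $w'(tv) - w'(yv) = [w(tv) - w(yv)] + [w(tu) - w(yu)]$ hold uniformly, including the degenerate cases where some of these arcs are missing.
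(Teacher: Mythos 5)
Your proof is correct and takes essentially the same route as the paper's: show $\alpha_y$ is preserved by the weight transfer, then bound the contribution of the endpoint $v$ in $D'$ by the sum of the old contributions at $u$ and $v$ (the paper phrases this as $\max(a,0)+\max(b,0)\ge\max(a+b,0)$ applied to the maximizing middle vertex), with the deleted endpoint $u$ supplying exactly the needed slack. If anything your write-up is slightly more careful than the paper's, which treats the terms $\beta_y(z)$ for $z\ne u,v$ as unchanged when they can in fact only decrease (since paths through $u$ as a middle vertex disappear); the inequality goes the right way in either case.
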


Notice that in this lemma statement, $x$ may equal $v$, and therefore the lemma does not apply if $v \to u$ is an arc of nonzero weight. We typically we will apply the lemma when $u \to v$ is an arc.    

\begin{proof}
Let the property P be the property that for any $x$ such that $x \to u$ is an arc of nonzero weight, we have $x \to v$ is also an arc (possibly of weight zero).

Let $y$ be any vertex.  We will show that $\alpha^{D'}_y = \alpha^D_y$ and $\beta^{D'}_y \leq \beta^D_y$, which will prove the lemma.  

We first show that $\alpha^{D'}_y = \alpha^D_y$.  If $y \to u$ is an arc of nonzero weight, then by property P, we have that $y \to v$ is an arc, and the only change in $\alpha^D_y$ is the weight transfer from $y \to u$ to $y \to v$ and the removal of $u$.  In this case, $\alpha^{D'}_y = \alpha^D_y$.  If $y \to u$ is not an arc or has zero weight, then the first neighborhood of $y$ either remains unchanged or loses an arc of weight zero, and hence $\alpha^{D'}_y = \alpha^D_y$.

We next show that $\beta^{D}_y \geq \beta^{D'}_y$.  Going from $D$ to $D'$, $y$ potentially loses second neighborhood weight on $u$ since $u$ is deleted, and potentially gains second neighborhood weight on $v$ because arcs into $v$ may gain weight.  Let $\gamma$ be the sum of all the other unaffected second neighborhood weight.  In other words,  $\gamma = \sum_{z \neq u, v} \beta_y(z)$.   Let $x$ be the vertex such that $y \to x$ is an arc and the second neighborhood weight of $v$ in $D'$ is $\max(w_{D'}(x v) - w_{D'}(y v), 0)$.   The second neighborhood weight of $u$ and $v$ in $D$ is at least $\max(w_D(x v) - w_D(y v), 0) + \max(w_D(x u) - w_D(y u, 0)$.  Thus, we have
\begin{eqnarray*}
 \beta^{D}_y & \geq & \max(w_D(x v) - w_D(y v), 0) + \max(w_D(x u) - w_D(y u), 0) + \gamma \\
& \geq & \max((w_D(x v) - w_D(y v)) + (w_D(x u) - w_D(y u)), 0) + \gamma \\
& = & \max((w_D(x v) + w_D(x u))  - (w_{D}(y v) + w_D(y u)), 0) + \gamma \\
& = & \max(w_{D'}(x v) - w_{D'}(y v), 0) + \gamma \\
& = &  \beta^{D'}_y.
\end{eqnarray*}
as desired.

\end{proof}

 Now the main result.

\begin{thm}
\label{thm:arc-tournaments}
Every arc-weighted tournament $D$ contains a Seymour vertex.  
\end{thm}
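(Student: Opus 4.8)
The plan is to argue from a minimal counterexample and use Lemma~\ref{lemma:contract} to whittle it down to a tournament in which every arc lies on a directed triangle, where an averaging argument can finish. So suppose the theorem fails, and among all arc-weighted tournaments in which every vertex is strongly contracting (that is, $\delta_v < 0$ for all $v$) choose one, $D$, with the fewest vertices. Since a one- or two-vertex tournament always has a vertex with $\delta \geq 0$, we have $|V(D)| \geq 3$.

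The first main step is to show that $D$ must be \emph{triangle-full}, meaning every arc lies on a directed triangle. Suppose instead that some arc $u \to v$ lies on no directed triangle. I claim the hypothesis of Lemma~\ref{lemma:contract} holds for contracting $u$ to $v$. Indeed, let $x$ be any vertex with $x \to u$ an arc. Because $D$ is a tournament, either $x \to v$ or $v \to x$; but if $v \to x$ then $u \to v \to x \to u$ would be a directed triangle through $u \to v$, a contradiction. Hence $x \to v$ is an arc, as required (and since $u \to v$ is the arc between $u$ and $v$, the case $x = v$ causes no problem). Contracting $u$ to $v$ yields a new arc-weighted digraph $D'$; since we only delete $u$ and reroute arcs among the remaining vertices, $D'$ is again a tournament, now on one fewer vertex. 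By Lemma~\ref{lemma:contract}, $\delta^{D'}_y \leq \delta^D_y < 0$ for every remaining vertex $y$, so $D'$ is a smaller all-strongly-contracting tournament, contradicting minimality. Therefore every arc of $D$ lies on a directed triangle; in particular $D$ has no source and no sink.

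It remains to derive a contradiction from a triangle-full arc-weighted tournament in which every vertex is strongly contracting. The goal is to show $\sum_{v} \delta_v \geq 0$, which is incompatible with $\delta_v < 0$ for all $v$. Since $\sum_v \alpha_v = \sum_{uv \in A(D)} w(uv)$ is just the total arc-weight $W$, it suffices to prove $\sum_v \beta_v \geq W$. This is the arc-weighted analogue of the containment $N_1^-(v) \subseteq N_2^+(v)$ exploited in Proposition~\ref{prop:triangle-full}: triangle-fullness guarantees that for every in-neighbor $s$ of $v$ there is a path $v \to x \to s$, and since $vs$ is then not an arc we obtain $\beta_v(s) \geq w(xs)$, so each second neighbor contributes the weight of a heavy in-arc. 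The hard part, and the main obstacle, is that $\beta_v(s)$ is a \emph{maximum} over the possible middle vertices $x$, so a single pair $(v,s)$ captures only one in-arc of $s$ rather than all of them, and a naive summation therefore undercounts. To overcome this I would set up a careful charging (or, in the spirit of Fisher's losing-density argument, a weighting produced from Farkas' Lemma and Theorem~\ref{thm:expanding-or-contracting}, which is now available because the reduced tournament is triangle-full and hence sink-free) that assigns each unit of arc-weight to a distinct $\beta_v(s)$ contribution, yielding $\sum_v \beta_v \geq \sum_v \alpha_v$ and the desired contradiction.
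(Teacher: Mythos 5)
Your first step is sound, and it is arguably a cleaner route to the same place the paper reaches in its first case: if an arc $u \to v$ of a minimum counterexample lies on no directed triangle, then any $x$ with $x \to u$ must satisfy $x \to v$ (and $x=v$ cannot occur since $u\to v$ is the arc between them), so Lemma~\ref{lemma:contract} contracts $u$ to $v$ and yields a smaller all-strongly-contracting tournament, a contradiction. Hence a minimum counterexample is triangle-full, and — via Proposition~\ref{prop:triangle-full} applied to the vertex-weighted $\overleftarrow{D}$ together with Theorem~\ref{thm:expanding-or-contracting} — it admits a vertex-weighting $\eta^*$ under which every vertex of $D$ is weakly expanding. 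Up to here you and the paper agree in substance.

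The gap is the entire second half. You correctly diagnose that the naive analogue of Proposition~\ref{prop:triangle-full} breaks down because $\beta_v(s)$ is a \emph{maximum} over middle vertices rather than a sum, so proving $\sum_v \beta_v \geq \sum_v \alpha_v$ would require assigning each arc $u \to s$ injectively to a pair $(v,s)$ with $v \to u$ and $w(vs)=0$; you do not construct such an assignment, and there is no reason a system of distinct representatives of this kind should exist (the candidates $v$ for a given $u$ can be few, e.g.\ when one in-neighbor of $s$ dominates the others). The paper never proves such a summation inequality. Instead it imposes a \emph{second} extremal condition — among minimum-vertex counterexamples, maximize the number of weight-zero arcs — and uses $\eta^*$ to perturb the weights: replacing $w(uv)$ by $w(uv)-\epsilon\eta^*(v)$ on nonzero arcs lowers each $\alpha_v$ by exactly $\epsilon\eta^*(N_1^+(v))$ and each $\beta_v$ by at least $\epsilon\eta^*(N_2^+(v)) \geq \epsilon\eta^*(N_1^+(v))$, so the counterexample survives while gaining a zero-weight arc, contradicting maximality — but this only works when every second neighbor of every vertex is reached through some nonzero arc. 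The leftover case, where some second neighbor $x$ of $v$ is reached from $N_1^+(v)$ only by weight-zero arcs, is closed by a second, different application of Lemma~\ref{lemma:contract} (contracting $x$ to $v$, since any $y$ with $y \to x$ of nonzero weight must satisfy $y \to v$). Your proposal contains neither the perturbation step nor this second contraction, and your extremal setup (minimizing vertices only) could not support the perturbation argument in any case. As written, the proof stops exactly where the real difficulty begins.
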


\begin{proof}
Suppose $D$ with arc-weights $w$ is a counterexample, meaning every vertex is strongly contracting.  Suppose further that $D$ and $w$ are chosen to minimize the number of vertices, and subject to this constraint, to maximize the number of arcs of weight zero.  There are three cases.

\textbf{There is a vertex-weighting $\eta$ of $\overleftarrow{D}$ such that every vertex of $\overleftarrow{D}$ is strongly contracting.}  By Proposition~\ref{prop:triangle-full}, there must be an arc $u \to v$ in $\overleftarrow{D}$ not contained in any directed triangles.  Thus, for any vertex $x$, if $x \to u \in A(\overleftarrow{D})$, then $v \to x$ cannot be an arc, since that would be a directed triangle.  Since this is a tournament, $x \to v \in A(\overleftarrow{D})$.  Now apply arc-weights $w'$ to $\overleftarrow{D}$  where $w'(uv) = \eta(v)$. We can then apply Lemma~\ref{lemma:contract} to $\overleftarrow{D}$ with arc-weights $w'$ to obtain a counterexample with fewer vertices than $D$, contradicting that $D$ had the minimum number of vertices.

Hence, we can assume $\overleftarrow{D}$ does not have a vertex-weighting such that every vertex is strongly contracting.  Therefore, by Theorem~\ref{thm:expanding-or-contracting}, $D$ has a vertex-weighting $\eta^*$ such that every vertex is weakly expanding.

\textbf{For every vertex $v$ and every second neighbor $x$, there is an arc of nonzero weight from $N_1^+(v)$ to $x$.}  Create new arc-weights $w^*$ where $w^*(uv) = w(uv) - \epsilon \eta^*(v)$ if $w(uv) > 0$, and $w^*(uv) = 0$ if $w(uv) = 0$.  Here, $\epsilon$ is chosen so that no arc changes to negative weight and at least one arc changes to weight zero.  

Notice that $D$ with arc-weights $w^*$ is still a counterexample.  For any vertex $v$, its second neighborhood weight is decreased by at least $\epsilon \eta^*(N_2^+(v))$, which doesn't count any lowering of second neighborhood weight within $N_1^+(v)$. The first neighborhood weight is lowered by $\epsilon \eta^*(N_1^+(v))$.  By the definition of $\eta^*$, we have $\eta^*(N_1^+(v)) \leq \eta^*(N_2^+(v))$, and hence the second neighborhood weight of every vertex decreased by at least as much as the first neighborhood weight.  Since $D$ with arc-weights $w^*$ is a counterexample with more arcs of weight zero than $w$, we have contradicted the fact that $w$ was chosen to maximize the number of arcs of weight $0$.  

\textbf{There exists a vertex $v$ and a second neighbor $x$ of $v$ such that all the arcs from $N_1^+(v)$ to $x$ are weight zero.}  If $y \to x$ is an arc of nonzero weight, we cannot have $v \to y$ as an arc, since then $y \to x$ would be an arc of nonzero weight from $N_1^+(v)$ to $x$.  Since $D$ is a tournament, $y \to v$ is an arc.  Then by Lemma~\ref{lemma:contract}, we can contract along $x \to v$ to obtain a counterexample with fewer vertices, contradicting the minimality of $D$.

\end{proof}

We remark that the proof of Theorem~\ref{thm:arc-tournaments} relies on the fact that for a tournament, either Proposition~\ref{prop:triangle-full} applies or Lemma~\ref{lemma:contract} applies.  This is not true for general digraphs, and it seems difficult to extend this proof method beyond tournaments.

\subsection*{Acknowledgements}  The author would like to thank Stephen Hartke for introducing him Fisher's work on this problem and Debbie Seacrest for her feedback and discussions.

\bibliographystyle{plain}
\bibliography{2ndNeighborhood}

\end{document}